\let\noi\noindent
\newcommand{\ms}{\medskip}
\newcommand{\red}{\mathrm{red}}
\newcommand{\Gal}{\mathrm{Gal}}
\newcommand{\Hom}{\mathrm{Hom}}
\newcommand{\coker}{\mathrm{coker}}
\newcommand{\cd}{\mathop{cd}\nolimits}
\newcommand{\Z}{\mathds Z}
\newcommand{\lang}{\longrightarrow}
\renewcommand{\O}{\mathcal{O}}
\newcommand{\Cat}{\mathrm{Cat }}
\newcommand{\et}{\mathrm{et}}
\newcommand{\res}{\mathrm{res}}
\newcommand{\Nis}{\mathrm{Nis}}
\renewcommand{\P}{{\mathbb P}}
\newcommand{\Sh}{\mathit{Sh}}
\newcommand{\cC}{{\mathscr C}}
\newtheoremstyle{alexthm}
  {}
  {}
  {\sl }
  {}
  {\bf}
  {.}
  {.5em}
  {}
\theoremstyle{alexthm}
\newtheorem{theorem}{Theorem}[section]
\newtheorem*{theorem*}{Theorem}
\newtheorem{corollary}[theorem]{Corollary}
\newtheorem{proposition}[theorem]{Proposition}
\newtheorem*{lemma*}{Lemma}
\newtheoremstyle{alexdef}
  {}
  {}
  {\rm }
  {}
  {\bf}
  {.}
  {.5em}
  {}
\theoremstyle{alexdef}
\newtheorem*{example*}{Example}
\newtheorem{example}[theorem]{Example}
\newtheorem{remark}[theorem]{Remark}
\newtheorem{definition}[theorem]{Definition}
\DeclareMathOperator{\Spec}{\textit{Spec}}
\DeclareMathOperator*{\colim}{colim}
\newcommand{\liso}{\mathrel{\hbox{$\longrightarrow$} \kern-2.4ex\lower-1ex\hbox{$\scriptstyle\sim$}\kern1.7ex}}
\title{\bf On the \'{e}tale site of marked schemes}
\author{Alexander Schmidt}
\date{}
\begin{document}
\maketitle

When considering the \'{e}tale site of a scheme it is often of interest to consider a variant which forces a given set of points to split in at least one member of a covering. Examples are the \'{e}tale site of a marked curve used in \cite{Sch}, where a finite set of closed points is considered and the Nisnevich site \cite{Nis}, where all points are required to split. In this note we develop this approach in greater generality. Furthermore, we close a small gap in the literature by showing that any Nisnevich covering of a quasi-compact scheme has a finite subcovering.

\section{Definition of the marked site}\label{defsect}

Let $X$ be a scheme and let $T$ be a set of points of $X$. We will loosely write $T\subset X$ and call the pair $(X,T)$ a \emph{marked scheme}. A morphism $f: (Y,S)\to (X,T)$ of marked schemes is a scheme morphism $f: Y\to X$ with $f(S)\subset T$.

\begin{definition}\label{def:marked site}
Let $(X,T)$ be a marked scheme. The \emph{marked \'{e}tale site} $(X,T)_\et$ consists of the following data:
The category $\Cat (X,T)_\et$ is the category of morphisms $f: (U,S)\to (X,T)$ such that \smallskip
\begin{compactitem}
  \item[\rm a)] $(f: U \to X)$ is \'{e}tale, and
  \item[\rm b)] $S=p^{-1}(T)$.
\end{compactitem}

\smallskip\noindent
A family $(p_i: (U_i,S_i) \to (U,S))_{i\in I}$ of morphisms in $\Cat (X,T)_\et$ is a covering if it is surjective and any point $s\in S$ \emph{splits}, i.e., there exists an index~$i$ and a point $u_i\in S_i$ mapping to $s$ such that the induced field homomorphism $k(s)\to k(u_i)$ is an isomorphism.
\end{definition}

\begin{example}
For $T=\varnothing$, we obtain the small \'{e}tale site of $X$, for $T=X$ the Nisnevich site \cite{Nis}.
\end{example}

A morphism of marked schemes induces a morphism of the associated marked \'{e}tale sites in the obvious way.

\medskip\noindent
We consider the following ``geometric points'' of $(X,T)_\et$: we fix a separable closure $k(x)^s$ of $k(x)$ for every scheme-theoretic point $x\in X$ and consider the following morphisms of marked schemes
\begin{compactitem}
\item[1.)] for  $x\notin T$, the natural morphism $(\Spec k(x)^s,\varnothing)\to (X,T)$.
\item[2.)] for  $x\in T$, the natural morphisms $(\Spec \kappa,\Spec \kappa)\to (X,T)$ for every subextension $\kappa/k(x)$ of $k(x)^s/k(x)$.
\end{compactitem}
If $f:P\to X$ is any of the morphisms described in 1.) and 2.), the assignment $F\mapsto \Gamma(P,f^*F)$ is a topos-theoretic point of $(X,T)_\et$ and one easily verifies that this family of points is conservative. In particular, exactness of sequences of abelian sheaves can be checked stalkwise.

We denote the cohomology of a sheaf $F\in \Sh_\et(X,T)$ of abelian groups on $(X,T)_\et$ by $H^*_\et(X,T,F)$.

\section{Excision}

Let $(X,T)$ be a marked scheme, $i:Z\hookrightarrow X$ a closed immersion and $U=X\smallsetminus Z$ the open complement.
The right derivatives of the left exact functor ``sections with support in $Z$''
\[
F \mapsto \ker (F(X,T) \to F(U,T\cap U))
\]
are called the \emph{cohomology groups with support in $Z$}. Notation:  $H^*_Z(X,T, F)$.

The proof of the next proposition is standard (cf.\ \cite[III,\,(2.11)]{Art} for the \'{e}tale case without marking).

\begin{proposition}\label{longex}
There is a long exact sequence
\begin{multline*}
0 \rightarrow H^0_Z(X,T, F) \rightarrow H^0_\et (X,T, F) \rightarrow H^0_\et(U, T\cap U,F) \rightarrow \\H^1_Z(X,T, F) \rightarrow H^1_\et(X, T, F) \rightarrow H^1_\et(U,T\cap U, F) \rightarrow \ldots \qquad
\end{multline*}
\end{proposition}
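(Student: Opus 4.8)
The plan is to express $\Gamma_Z(X,T,-)=\ker\!\big(\Gamma(X,T,-)\to\Gamma(U,T\cap U,-)\big)$ as a $\Hom$ out of a single sheaf and to read off the long exact sequence from a short exact sequence of sheaves on $(X,T)_\et$ (equivalently, from a short exact sequence of complexes obtained from an injective resolution); this is exactly the pattern of \cite[III,\,(2.11)]{Art}, and the marking $T$ will only be carried along.

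First I would record the functoriality of the open immersion $j\colon(U,T\cap U)\hookrightarrow(X,T)$. It is a morphism in $\Cat(X,T)_\et$, and the marked \'etale site $(U,T\cap U)_\et$ is canonically the localized site $(X,T)_\et/(U,T\cap U)$: an object $V\to U$ that is separated \'etale of finite presentation with the induced marking is, via $V\to U\hookrightarrow X$, the same datum as an object of $\Cat(X,T)_\et$ lying over $(U,T\cap U)$, and the covering families (\'etale plus the splitting condition on marked points) correspond. Hence the restriction functor $j^*\colon\Sh_\et(X,T)\to\Sh_\et(U,T\cap U)$ is exact and has an exact left adjoint $j_!$, so $j^*$ carries injective sheaves to injective sheaves. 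Moreover, using the stalks described in Section~\ref{defsect}, the stalk of $j_!\Z_U\to\Z_X$ at a point over $x$ is the identity $\Z\to\Z$ if $x\in U$ and $0\hookrightarrow\Z$ if $x\in Z$; thus $j_!\Z_U\to\Z_X$ is a monomorphism. Writing $C$ for its cokernel we obtain a short exact sequence
\[
0\lang j_!\Z_U\lang\Z_X\lang C\lang 0
\]
in $\Sh_\et(X,T)$.

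Next I would identify $\Gamma_Z$ with $\Hom(C,-)$. Indeed $\Gamma(X,T,F)=\Hom_{(X,T)_\et}(\Z_X,F)$, and by the adjunction $(j_!,j^*)$ one has $\Gamma(U,T\cap U,j^*F)=\Hom_{(U,T\cap U)_\et}(\Z_U,j^*F)=\Hom_{(X,T)_\et}(j_!\Z_U,F)$, the composite $\Gamma(X,T,F)\to\Gamma(U,T\cap U,j^*F)$ being induced by $j_!\Z_U\to\Z_X$. Therefore $\Gamma_Z(X,T,F)=\Hom_{(X,T)_\et}(C,F)$, and consequently $H^n_Z(X,T,F)=\mathrm{Ext}^n_{(X,T)_\et}(C,F)$ for all $n$.

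Finally I would apply $\mathrm{Ext}^\bullet_{(X,T)_\et}(-,F)$ to the displayed short exact sequence to get a long exact sequence relating $\mathrm{Ext}^n(C,F)$, $\mathrm{Ext}^n(\Z_X,F)$ and $\mathrm{Ext}^n(j_!\Z_U,F)$, and then identify the three families of terms: $\mathrm{Ext}^n(C,F)=H^n_Z(X,T,F)$ by the previous step; $\mathrm{Ext}^n(\Z_X,F)=H^n_\et(X,T,F)$ by definition of sheaf cohomology; and, choosing an injective resolution $F\to I^\bullet$, since $j^*$ preserves injectives $j^*F\to j^*I^\bullet$ is again an injective resolution and $\Hom_{(X,T)_\et}(j_!\Z_U,I^\bullet)=\Hom_{(U,T\cap U)_\et}(\Z_U,j^*I^\bullet)$ computes $H^n_\et(U,T\cap U,j^*F)$, the induced map from $H^n_\et(X,T,F)$ being the restriction. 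Left exactness of $\Gamma_Z$ (equivalently of $\Hom(-,F)$) accounts for the $0$ at the left-hand end. Equivalently, one may avoid $\mathrm{Ext}$ altogether: with $F\to I^\bullet$ injective, the sequence of complexes $0\to\Gamma_Z(X,T,I^\bullet)\to\Gamma(X,T,I^\bullet)\to\Gamma(U,T\cap U,j^*I^\bullet)\to0$ is exact (surjectivity on the right because each $I^n$ is injective and $j_!\Z_U\hookrightarrow\Z_X$ is a monomorphism), and its long cohomology sequence is the asserted one.

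The only genuine inputs are the two facts from general site theory used above: that $(U,T\cap U)_\et$ is a localization of $(X,T)_\et$, so that $j^*$ has an exact left adjoint and hence preserves injectives, and that $j_!\Z_U\to\Z_X$ is a monomorphism. Both are routine; once they are in place the proof is the standard homological argument, and the marking plays no essential role.
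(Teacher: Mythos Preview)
Your proof is correct and is precisely the standard argument the paper defers to by citing \cite[III,\,(2.11)]{Art}; the paper itself gives no proof beyond that reference. Your careful verification that $(U,T\cap U)_\et$ is the slice site and that $j_!\Z_U\hookrightarrow\Z_X$ (checked on the stalks described in Section~\ref{defsect}) is exactly what is needed to justify that the marking $T$ plays no obstructive role in the usual argument.
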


\begin{proposition}[Excision] Let $\pi: (X',T') \rightarrow (X,T)$ be a morphism of marked schemes,
$Z \hookrightarrow X$, $Z' \hookrightarrow X'$ closed immersions and $U=X\smallsetminus Z$, $U'=X'\smallsetminus Z'$ the open complements. Assume that

\ms
\begin{compactitem}
\item $\pi: X' \to X$ is \'{e}tale,
\item $T'=\pi^{-1}(T)$,
\item $\pi$ induces an isomorphism $Z'_\red \liso Z_\red $,
\item $\pi(U') \subset U$.
\end{compactitem}

\ms\noi
Then the induced homomorphism
\[
H^p_Z(X,T, F) \liso H^p_{Z'}(X', T', \pi^*F)
\]
is an isomorphism for every sheaf $F \in \Sh_\et(X,T)$ and all $p \geq 0$.
\end{proposition}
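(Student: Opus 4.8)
The plan is to run the classical proof of excision for étale cohomology (cf.\ \cite[III]{Art}) while carrying the marking along; the marking hypotheses $T'=\pi^{-1}(T)$ and $Z'_\red\xrightarrow{\ \sim\ }Z_\red$ will be used at exactly one point, in the comparison of residue data over points of $T$. I would start with two harmless reductions. Since $H^*_Z(X,T,-)$ is built from $F\mapsto\ker\bigl(F(X,T)\to F(U,T\cap U)\bigr)$, it depends on $Z$ only through the open set $U=X\smallsetminus|Z|$, and the marked \'{e}tale site of $X$ coincides with that of $X_\red$; hence we may assume $Z$ and $Z'$ reduced. The hypotheses then give $\pi(|Z'|)=|Z|$, and together with $\pi(U')\subset U$ this forces $\pi^{-1}(Z)=Z'$ as sets; since $\pi$ is \'{e}tale, $\pi^{-1}(Z)$ is reduced, so $Z'=\pi^{-1}(Z)$ as schemes and $U'=\pi^{-1}(U)$. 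As a sanity check — and the reason the statement is true — note that $\{(X',T')\to(X,T),\ (U,T\cap U)\hookrightarrow(X,T)\}$ is a covering in $(X,T)_\et$: it is surjective because $\pi(X')\supset Z$, and a marked point $t\in T$ splits, through $U$ if $t\notin Z$ and through $X'$ if $t\in Z$, because in the latter case the unique point $t'\in Z'$ over $t$ lies in $\pi^{-1}(T)=T'$ and satisfies $k(t)\xrightarrow{\ \sim\ }k(t')$.

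Next I would localise on $Z$. Writing $i\colon Z\hookrightarrow X$, $j\colon U\hookrightarrow X$ and $\underline\Gamma_ZF=\ker(F\to j_*j^*F)$, the usual formalism (the functor $\underline\Gamma_Z$ carries injective sheaves to $\Gamma$-acyclic ones, and $R^q\underline\Gamma_ZF$ is supported on $Z$) gives $R\Gamma_Z(X,T,F)\simeq R\Gamma\bigl(Z,T\cap Z,\ i^*R\underline\Gamma_ZF\bigr)$ and hence a convergent spectral sequence $E_2^{pq}=H^p\bigl(Z,T\cap Z,\underline H^q_ZF\bigr)\Rightarrow H^{p+q}_Z(X,T,F)$, where $\underline H^q_ZF=i^*R^q\underline\Gamma_ZF$ (equivalently, $i^*$ of the sheafification of $V\mapsto H^q_{Z\cap V}(V,T\cap V,F)$). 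The same holds on $(X',T')$ over $Z'$, and because $Z'=\pi^{-1}(Z)$ one has $\pi^*\underline\Gamma_ZF=\underline\Gamma_{Z'}\pi^*F$ (open-immersion base change) and $\pi^*i_*\cong i'_*\bar\pi^*$ (closed-immersion base change), with $\bar\pi\colon Z'\xrightarrow{\ \sim\ }Z$ the projection. Thus $\pi^*$ induces a map of the two spectral sequences, and since a morphism of convergent first-quadrant spectral sequences which is an isomorphism on $E_2$ is one on the abutments, it suffices to show that $\bar\pi^*\underline H^q_ZF\to\underline H^q_{Z'}\pi^*F$ is an isomorphism of sheaves on $(Z',T'\cap Z')_\et$ for every $q$.

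As that site has enough points, I would check this stalkwise. A point of $(Z',T'\cap Z')_\et$ is a geometric point over some $z'\in Z'\smallsetminus T'$, or one attached to a finite subextension of $k(z')^s/k(z')$ when $z'\in T'$. Using that the objects of the marked site are of finite presentation, continuity of cohomology along the cofiltered system of marked \'{e}tale neighbourhoods identifies the stalk of $\underline H^q_{Z'}\pi^*F$ with $H^q_{Z_0'}(X_0',T_0',\pi^*F)$, where $X_0'=\Spec\mathcal O$ and $\mathcal O$ is the relevant henselisation of $\mathcal O_{X',z'}$ (strict if $z'\notin T'$, the one with residue field $k(z')$ if $z'\in T'$, a finite layer of the strict henselisation in the intermediate cases), and likewise the stalk of $\bar\pi^*\underline H^q_ZF$ with the corresponding group over the matching local scheme $X_0$ of $X$ at $z$. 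Here the marking hypotheses enter: $\pi$ is \'{e}tale with $k(z)=k(z')$, and $z'\in T'$ precisely when $z\in T$ (as $T'=\pi^{-1}(T)$), so the two henselian local data coincide. Concretely, over the henselian $X_0$ the \'{e}tale scheme $X_0'$ has a clopen subscheme $\Delta\ni z'$ mapped isomorphically onto $X_0$ by $\pi$; from $\pi^{-1}(Z)=Z'$ and $Z_0'\xrightarrow{\ \sim\ }Z_0$ one deduces that the point of $Z_0'$ over any point of $Z_0$ must be the point of $\Delta$ lying there, so $Z_0'\subset\Delta$. Cohomology with supports is additive along $X_0'=\Delta\sqcup(X_0'\smallsetminus\Delta)$ and the second piece has empty support, hence $H^q_{Z_0'}(X_0',T_0',\pi^*F)\xrightarrow{\ \sim\ }H^q_{Z_0}(X_0,T_0,F)$ via $\Delta\xrightarrow{\ \sim\ }X_0$, and one checks this composite is the map under study. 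This gives the claim, and with it the proposition.

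The only step where I expect genuine work is the last one: checking that marked \'{e}tale neighbourhoods of $z$ in $X$ and of $z'$ in $X'$ form cofinal systems with compatible cohomology-with-support groups — i.e.\ that localising yields the \emph{same} (strictly, respectively Nisnevich-)henselian datum on both sides, which is exactly the content of $T'=\pi^{-1}(T)$ together with $Z'_\red\xrightarrow{\ \sim\ }Z_\red$ — and that every comparison map above is the canonical one. None of this is deep; it is the classical excision argument with the marking carried through.
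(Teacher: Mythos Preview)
Your argument is essentially correct but takes a much longer route than the paper's. The paper observes that $\pi$ itself lies in $\Cat(X,T)_\et$ (since $\pi$ is \'etale and $T'=\pi^{-1}(T)$), so $\pi^*$ has the exact left adjoint $\pi_!$ (extension by zero) and therefore preserves injectives; this reduces the statement to $p=0$ immediately. At that point the covering $(X',T')\sqcup(U,T\cap U)\to(X,T)$ --- which you verified but used only as a ``sanity check'' --- does all the work: injectivity of $H^0_Z\to H^0_{Z'}$ is clear since a section vanishing on both members of a covering is zero, and surjectivity is a sheaf-gluing argument, the only nontrivial cocycle condition being $p_1^*\alpha'=p_2^*\alpha'$ on $X'\times_XX'$, which follows stalkwise from the fact that $Z'\xrightarrow{\ \sim\ }Z$ forces the two projections $Z'\times_ZZ'\to Z'$ to coincide. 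Your local-cohomology spectral sequence and stalkwise henselisation reproduce this, but at the cost of setting up $i^*$, $i_*$, base change, and local-to-global machinery in the marked setting, none of which is needed once one knows $\pi^*$ preserves injectives.

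There is also a technical gap in your route as written. The step ``continuity of cohomology along the cofiltered system of marked \'etale neighbourhoods identifies the stalk of $\underline H^q_{Z'}\pi^*F$ with $H^q_{Z'_0}(X'_0,T'_0,\pi^*F)$'' is precisely the content of \cref{cont}, which in this paper is proved \emph{after} excision and under the hypothesis that $T$ is closed in $X$ --- a hypothesis the excision statement does not carry. So your argument either imports an extra assumption or requires an independent proof of the relevant limit statement for marked cohomology; the paper's direct approach avoids this entirely.
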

\begin{proof} The standard proof for \'{e}tale topology applies: By the general theory, $\pi^*$ is exact.
Since $\pi$ belongs to $\Cat (X,T)_\et$, $\pi^*$ has the exact left adjoint ``extension by zero'',  hence $\pi^*$ sends injectives to injectives. Therefore it suffices to deal with the case $p =0$. Without changing the statement, we can replace all occurring schemes by their reductions.
By assumption,
\[
(X',T') \ \coprod \ (U,T\cap U) \longrightarrow (X,T)
\]
is a covering.
For $\alpha \in H^0_Z (X,T, F)$ mapping to zero in  $H^0_{Z'}(X', T', \pi^*F)$ we therefore obtain $\alpha=0$.

Now let $\alpha' \in H^0_{Z'}(X', T', \pi^*F)$ be given.
We show that $\alpha'$ and $0\in H^0(U,T\cap U, F)$ glue to an element in $H^0_Z(X,T, F)$. The only nontrivial compatibility on intersections is  $p^*_1(\alpha')=p^*_2(\alpha')$ for $p_1,p_2:  (X'\times_XX', T'\times_T T') \to (X',T')$.
This can be checked on stalks noting that $Z'\liso Z$ implies that the two projections $Z'\times_Z Z'\to Z'$ are the same.
\end{proof}

\section{Continuity}
\begin{proposition}\label{noetherian}
Let $X$ be a quasi-compact scheme and let $T\subset X$ be a closed subscheme. Then every \'{e}tale covering of $(X,T)$ admits a finite subcovering.
\end{proposition}

\begin{proof}
Since $X$ has a finite affine Zariski-open covering, we may assume that $X$ is affine, in particular $X$ is quasi-separated. Then also $T$ is quasi-compact and quasi-separated. Let
\[
\coprod_{i\in I} (U_i,S_i) \to (X,T)
\]
be an \'{e}tale covering. Let, for $i\in I$, $K_i\subset T$ be the set of points in $T$ which split in $U_i \to X$. By \cite[Lemma 13.3]{Scr}, $K_i$ is ind-constructible, i.e., open in the constructible topology of $T$, which is compact by \cite[1.9.15\,(iii)]{ega}. Since $T=\bigcup_i K_i$ by assumption, we find a finite subset $J\subset I$ with $T=\bigcup_{i\in J}K_i$.  Furthermore, since $X$ is quasi-compact and \'{e}tale morphisms are open, we find a finite subset $J'\subset I$ such that $\coprod_{i\in J'} U_i \to X$ is an \'{e}tale covering. We conclude that $\coprod_{i\in J\cup J'} (U_i,S_i) \to (X,T)$ is a finite subcovering of $\coprod_{i\in I} (U_i,S_i) \to (X,T)$.
\end{proof}

As in \cite[VII,\, 3.2]{sga4}  for the unmarked \'{e}tale site, we define the \emph{restricted marked \'{e}tale site}
\[
(X,T)_\et^\res
\]
as the restriction of $(X,T)_\et$ to the subcategory of all $(U,S)\in (X,T)_\et$ where $U\to X$ is of finite presentation. Assume that $X$ is quasi-compact and quasi-separated. Then the same is true for any such $U$ and \cref{noetherian} shows that the restricted site is noetherian. Moreover, the categories of sheaves on $(X,T)_\et$ and $(X,T)_\et^\res$ are naturally equivalent. Hence the same argument as in the unmarked \'{e}tale case \cite[VII,\,Prop.\,3.3]{sga4} shows

\begin{theorem}
\label{colimitsheaves} Let $X$ be a quasi-compact and quasi-separated scheme and let $T\subset X$ be a closed subscheme. Let $(F_i)$ be a filtered direct system of abelian sheaves on $(X,T)_\et$. Then
\[
\colim_i \, H^p_\et(X,T, F_i)\cong H^p_\et(X,T, \colim_i \, F_i)
\]
for all $p\ge 0$.
\end{theorem}

Next we consider inverse limits of marked schemes.

\begin{theorem} \label{cont} Let $(X,T)$ be a marked scheme with $T$ closed in $X$ and let $X_i\to X$, $i\in I$, be an inverse system of $X$-schemes.
Assume that all $X_i$ are quasi-separated and quasi-compact and that all transition morphisms are affine. Let $T_i$ be the preimage of\ $T$ in $X_i$ and put $X_\infty=\varprojlim X_i$, $T_\infty= \varprojlim T_i$.

\smallskip
Then the restricted site $(X_\infty,T_\infty)_\et^\res$ is the limit site of the sites $(X_i,T_i)_\et^\res$.
\end{theorem}

\begin{corollary} \label{cohlimitscheme} With the notation and assumptions of \cref{cont}, let $F$ be a sheaf of abelian groups on $(X,T)_\et$. We denote its inverse image on $(X_i,T_i)_\et$ and $(X_\infty, T_\infty)_\et$ by $F_i$ and $F_\infty$.  Then the natural map
\[
\colim_i H^p_\et(X_i,T_i,F_i) \lang H^p_\et(X_\infty,T_\infty,F_\infty)
\]
is an isomorphism for all $p\ge 0$.
\end{corollary}

\begin{proof}[Proof of \cref{cont}]  By \cite[III, Theorem\,3.8]{Art}, the site $(X_\infty)_\et^\res$ is naturally equivalent to the limit site of the $(X_i)_\et^\res$. In view of \cref{noetherian}, it therefore suffices to show that for every quasi-compact \'{e}tale surjection $U_i\to X_i$ with the property that every point of $T_\infty$ splits in $U_\infty= U_i\times_{X_i} X_\infty\to X_\infty$ there exist $j \ge i$  such that every point of $T_j$ splits in $U_j= U_i \times_{X_i} X_j \to X_j$.
We follow the proof of \cite[Lemma 13.2]{Scr} for Nisnevich coverings. By \cite[Lemma 13.3]{Scr}, the subset $S_j \subset T_j$ of points that split in $U_j \to X_j$ is ind-constructible for all $j\ge i$. Denoting the projection by $u_j: T_\infty \to T_j$, the assumption on $U_\infty \to X_\infty$ implies  $T_\infty= \bigcup_j u_j^{-1}(S_j)$. Considering the $T_j\subset X_j$ as reduced, closed subschemes, we may apply \cite[Cor.\,8.3.4]{ega} to obtain $S_j=T_j$ for some $j$.
\end{proof}

\begin{remark} Let $A$ be a ring and let  $(A\to B_i)_{i\in I}$ be an affine Nisnevich covering. We write $A$ as the union of its finitely generated subrings. Then, by \cref{noetherian} and \cref{cont}, there exists a finite subset $J\subset I$, a subring $A'\subset A$ which is finitely generated over $\Z$ and a finite Nisnevich covering $(A' \to B'_{j})_{j\in J} $ such that $B_j\cong A\otimes_{A'} B_j'$ for all $j\in J$.

Hence the refined definition of Nisnevich coverings for general rings introduced by Lurie in \cite[XI, Definition 1.1 and Remark 1.15]{DAG} coincides with the naive definition.
\end{remark}

\begin{corollary} Let $(X,T)$ be a marked scheme with $T$ closed in $X$ and $Z=\{z_1,\ldots,z_n\}$ a finite set of closed points of $X$. Put $X_{z_i}^h=\Spec(\O_{X,z_i}^h)$. Then, for every sheaf $F$ of abelian groups on $(X,T)_\et$ and all $p\ge 0$
\[
H^p_Z(X,T,F) \cong \bigoplus_{i=1}^n H^p_{\{z_i\}}(X_{z_i}^h, T\cap X_{z_i}^h, F).
\]
\end{corollary}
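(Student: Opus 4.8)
The plan is to obtain the corollary by combining the Excision proposition with Theorem~\ref{cont}: excision replaces $X$ near $Z$ by a disjoint union of \'etale neighbourhoods of the $z_i$, and continuity then lets us pass to the limit over all such neighbourhoods, which is the henselization.

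For $1\le i\le n$ set $V_i:=X\smallsetminus(Z\smallsetminus\{z_i\})$, an open subscheme of $X$ with $V_i\cap Z=\{z_i\}$ and $\mathcal O_{X,z_i}=\mathcal O_{V_i,z_i}$, hence $X_{z_i}^h=\Spec\mathcal O_{V_i,z_i}^h$. Let $\mathcal N_i$ be the category of affine \'etale neighbourhoods $(W,w)\to(V_i,z_i)$: here $W$ is affine, $W\to V_i$ is \'etale, separated and of finite presentation, $w$ is the unique point of $W$ above $z_i$, and $k(z_i)\liso k(w)$. It is standard that $\mathcal N_i$ is cofiltered and that $\varprojlim_{\mathcal N_i}W=X_{z_i}^h$, a limit of affine schemes along affine transition morphisms; this is the usual description of the henselization.

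Now fix $(W_i,w_i)\in\mathcal N_i$ for each $i$, put $X':=\coprod_{i=1}^n W_i$ with structure morphism $\pi\colon X'\to X$ (the composite $W_i\to V_i\hookrightarrow X$ on the $i$th component), $T':=\pi^{-1}(T)$, and $Z':=\coprod_i\{w_i\}$ (reduced). One checks directly that $\pi$ is \'etale, that $\pi^{-1}(Z)=Z'$, that $\pi$ induces an isomorphism $Z'_\red\liso Z_\red$ componentwise (each $\Spec k(w_i)\liso\Spec k(z_i)$), and that $\pi(X'\smallsetminus Z')\subset X\smallsetminus Z$ (because $w_i$ is the only point of $W_i$ over $z_i$ and $V_i\cap Z=\{z_i\}$). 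Thus the hypotheses of the Excision proposition hold, and we obtain
\[
H^p_Z(X,T,F)\;\liso\;H^p_{Z'}(X',T',F|_{X'})\;=\;\bigoplus_{i=1}^n H^p_{\{w_i\}}\bigl(W_i,T\cap W_i,F|_{W_i}\bigr),
\]
the second identification being the elementary decomposition of local cohomology over a finite disjoint union. These isomorphisms are compatible with refinement of the $(W_i,w_i)$, so passing to the colimit over $\prod_i\mathcal N_i$ yields
\[
H^p_Z(X,T,F)\;\cong\;\bigoplus_{i=1}^n\ \varinjlim_{\mathcal N_i^{\mathrm{op}}}H^p_{\{w_i\}}\bigl(W_i,T\cap W_i,F|_{W_i}\bigr).
\]

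It remains to identify the $i$th colimit with $H^p_{\{z_i\}}(X_{z_i}^h,T\cap X_{z_i}^h,F)$, i.e.\ to show that cohomology with support commutes with the cofiltered limit $X_{z_i}^h=\varprojlim_{\mathcal N_i}W$. For this I would take the long exact sequences of Proposition~\ref{longex} for the pairs $\{w\}\hookrightarrow W$ and $\{z_i\}\hookrightarrow X_{z_i}^h$, pass to the (exact) colimit over $\mathcal N_i^{\mathrm{op}}$, and compare via the five lemma. The ``ambient'' terms match by Theorem~\ref{cont} applied to the system $\{W\}$ (the $W$ are affine, hence quasi-compact and quasi-separated, the transition maps are affine, and $T\cap W$ is closed in $W$ since $T$ is closed in $X$); the ``open part'' terms match by Theorem~\ref{cont} applied to the system $\{W\smallsetminus\{w\}\}$, whose limit is $X_{z_i}^h\smallsetminus\{z_i\}$ and whose transition maps are again affine. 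This last identification is the main obstacle; the one technical wrinkle inside it is that Theorem~\ref{cont} wants the punctured schemes $W\smallsetminus\{w\}=\pi_W^{-1}(V_i\smallsetminus\{z_i\})$ to be quasi-compact, which is automatic when $X$ is locally noetherian and in general requires a supplementary argument (one uses that $X_{z_i}^h\smallsetminus\{z_i\}$ is a filtered union of quasi-compact opens). Everything before this last step is a direct application of the Excision proposition.
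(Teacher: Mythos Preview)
Your proof is correct and follows essentially the same route as the paper: excision to affine \'etale neighbourhoods, then continuity (Theorem~\ref{cont}) together with the long exact sequence of Proposition~\ref{longex} and the five lemma to pass to the henselization. The only organizational difference is that the paper first splits $H^p_Z$ as $\bigoplus_i H^p_{\{z_i\}}$ and then excises each summand separately, whereas you achieve both at once by excising to the disjoint union $\coprod_i W_i$; the quasi-compactness wrinkle you flag for the punctured neighbourhoods is not addressed in the paper either.
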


\begin{proof}
Since
$
H^p_Z(X,T,F) \cong \bigoplus_{i=1}^n H^p_{\{z_i\}}(X, T, F)
$,
we may assume that $Z=\{z\}$ consists of a single closed point. Excision shows that
\[
H^p_{\{z\}}(X,T,F)= H^p_{\{z\}}(U,T\cap U,F)
\]
for every affine \'{e}tale open neighbourhood $U$ of $z$. Since $X_z^h$ is the limit over all these $U$, the long exact sequences of \cref{longex} together with \cref{cohlimitscheme} show the result.
\end{proof}

Using \cref{cohlimitscheme}, it is easy to calculate the stalks of the higher direct images of the site morphism $(X,T)_\et\to (X,X)_\et=X_\Nis$. The Leray spectral sequence together with the fact that the Nisnevich cohomological dimension of noetherian schemes is bounded by the Krull dimension \cite[Theorem 1.32]{Nis} yields:

\begin{corollary} \label{cohdim}
Let $X$ be a noetherian scheme of finite Krull dimension $d$, $T\subset X$ closed and assume that there exists a nonnegative integer
$N$ such that
\[
\cd ( k(x)) \leq N
\]
for all points $x\in X\smallsetminus T$. Then for every abelian torsion sheaf  $F$ on $(X,T)_\et$ we have
\[
H^p_\et(X,T,F)=0\quad \hbox{ for } p > N+d.
\]
\end{corollary}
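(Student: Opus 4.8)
Following the outline given just above the statement, the plan is to use the morphism of sites $\varphi\colon (X,T)_\et\to (X,X)_\et=X_\Nis$ attached to the morphism of marked schemes $(X,T)\to (X,X)$, and to bound separately the higher direct images $R^q\varphi_*F$ and the Nisnevich cohomology of $X$, feeding the outcome into the Leray spectral sequence
\[
E_2^{p,q}=H^p_\Nis\bigl(X,R^q\varphi_*F\bigr)\ \Longrightarrow\ H^{p+q}_\et(X,T,F).
\]
This sequence is available because $\varphi_*$ preserves injectives, having the exact left adjoint $\varphi^*$; note also that $R^q\varphi_*F$ is again a torsion sheaf.

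The first step is to compute the stalks of $R^q\varphi_*F$ at a point $x\in X$, viewed as a Nisnevich point via $\mathcal{O}_{X,x}^h$. Put $X_x^h=\Spec\mathcal{O}_{X,x}^h$ and let $T_x^h$ be the preimage of $T$. Since $T$ is closed and $X_x^h$ is the inverse limit of the affine \'{e}tale Nisnevich neighbourhoods $V$ of $x$ along affine transition maps, \cref{cont} identifies the stalk with the cohomology of the limit:
\[
\bigl(R^q\varphi_*F\bigr)_x\ \cong\ \varinjlim_V H^q_\et(V,T\cap V,F)\ \cong\ H^q_\et\bigl(X_x^h,T_x^h,F\bigr).
\]
Now there are two cases. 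If $x\notin T$, then no generisation of $x$ lies in the closed set $T$, and since every point of $X_x^h$ maps to a generisation of $x$ we get $T_x^h=\varnothing$; thus $(X_x^h,\varnothing)_\et$ is the small \'{e}tale site of the Henselian local scheme $X_x^h$, and the standard computation of its \'{e}tale cohomology gives $H^q_\et(X_x^h,F)\cong H^q\bigl(\mathrm{Gal}(k(x)^s/k(x)),F_x\bigr)$, which vanishes for $q>N$ as $\cd k(x)\le N$ and $F$ is torsion. If $x\in T$, then the closed point of $X_x^h$ is marked, so that in any marked \'{e}tale covering of $(X_x^h,T_x^h)$ the closed point splits; over a Henselian local ring such a splitting forces a genuine section (by Zariski's main theorem the splitting point lies on a clopen piece of the cover that is finite \'{e}tale of rank one over $X_x^h$). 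Hence the final object of $\Cat(X_x^h,T_x^h)_\et$ receives a section in every covering, the global sections functor on that site is exact, and $H^q_\et(X_x^h,T_x^h,F)=0$ for all $q>0$. In either case $(R^q\varphi_*F)_x=0$ for $q>N$, so that $R^q\varphi_*F=0$ for $q>N$.

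For the base we invoke \cite[Thm.\,1.32]{Nis}: since $X$ is noetherian of Krull dimension $d$, one has $H^p_\Nis(X,G)=0$ for every abelian sheaf $G$ and all $p>d$. Combining the two vanishing statements, $E_2^{p,q}=0$ whenever $p>d$ or $q>N$, hence whenever $p+q>N+d$, and the spectral sequence gives $H^n_\et(X,T,F)=0$ for $n>N+d$.

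I expect the delicate point to be the stalk computation at points $x\in T$, namely the claim that the marked \'{e}tale site of $(\Spec\mathcal{O}_{X,x}^h,T_x^h)$ has no higher cohomology; the key input is that over a Henselian local ring the splitting of the marked closed point in an \'{e}tale cover is automatically realised by a section, which makes that object local for the marked topology. Everything else is a formal assembly of continuity (\cref{cont}), the residue-field computation at points outside $T$, and Nisnevich's dimension bound, exactly as the passage preceding the statement anticipates.
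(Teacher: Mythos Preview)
Your argument is correct and follows exactly the route the paper indicates in the sentence preceding the corollary: pass to the Nisnevich site via $\varphi\colon (X,T)_\et\to X_\Nis$, use \cref{cont} to identify the Nisnevich stalks of $R^q\varphi_*F$ with $H^q_\et(X_x^h,T_x^h,F)$, bound these by $N$ via the case distinction $x\in T$ versus $x\notin T$, and conclude with the Leray spectral sequence and Nisnevich's dimension bound. The paper gives no further details, so your write-up is in fact a faithful expansion of its one-line sketch; the only superfluous remark is that $R^q\varphi_*F$ is torsion, since \cite[Thm.\,1.32]{Nis} applies to arbitrary abelian sheaves.
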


\section{Galois covers}
\begin{definition}
A \emph{Galois cover} of $X$ with finite Galois group $G$ in the site $(X,T)_\et$ is a morphism $(Y,S)\to (X,T)$ in $(X,T)_\et$ together with a right action of $G$ on $Y$ over $X$ such that the following holds:
\begin{enumerate}
  \item $(Y,S)\to (X,T)$ is a covering for the site $(X,T)_\et$.
  \item $Y\to X$ is an \'{e}tale Galois cover, i.e.,
  \[
 Y\times G \to  Y\times_X Y,\quad (y,g) \mapsto (y,yg)
  \]
 is an isomorphism.
\end{enumerate}
\end{definition}

\noindent
Since $G$ acts transitively on the set of points in $Y$ over a given point $x\in X$, we see that every $t\in T$ splits \emph{completely} in $Y/X$.

\begin{proposition}[Hochschild-Serre spectral sequence] Let $(Y,S) \to (X,T)$ be a Galois cover with finite group $G$ und $F\in \Sh_\et(X,T)$. Then there is a natural spectral sequence
\[
E_2^{pq}=H^p(G, H^q_\et(Y,S, F)) \Longrightarrow H^{p+q}_\et(X,T,F).
\]
\end{proposition}

\begin{proof}
The proof is word-by-word the same as for the \'{e}tale cohomology, see \cite[Theorem 2.20]{Mi}.
\end{proof}

\begin{remark} Assume that $X$ is quasi-compact and quasi-separated and $T\subset X$ closed. Let $$ (Y_i,S_i) \to (X,T)$$ be a directed inverse system of Galois covers with finite Galois groups $G_i$ and $(Y,S)=\lim (Y_i,S_i)$. Then $(Y,S)\to (X,T)$ is a pro-Galois cover with profinite Galois group $G=\lim G_i$. By \cref{cont}, for $F\in \Sh_\et(X,T)$, the groups $H^q_\et(Y,S,F)=\colim H^q_\et(Y_i,S_i, F)$ are discrete $G$-modules and we obtain the profinite Hochschild-Serre sequence
\[
E_2^{pq}=H^p(G, H^q_\et(Y,S, F)) \Longrightarrow H^{p+q}_\et(X,T,F),
\]
where $H^*(G,-)$ is the continuous cohomology of the profinite group $G$ with values in a discrete $G$-module (see \cite[I,\,\S2]{NSW}).
\end{remark}

\section{Fundamental group} We recall some facts from Artin-Mazur \cite{AM}. Let $\cC$ be a pointed site and  $\mathrm{HR}(\cC)$ the category of pointed hypercovers of $\cC$ \cite[\S8]{AM}. If $\cC$ is locally connected, then  the ``connected component functor'' $\pi$ defines an object
\[
\Pi \cC = \{\pi(K_\bullet)\}_{K_\bullet\in \mathrm{HR}(\cC)}
\]
in the pro-category of the homotopy category of pointed simplicial sets. By definition, the fundamental group of $\cC$ is the pro-group $\pi_1(\Pi(\cC))$.

Let $X$ be a locally noetherian scheme. Then (cf.\ \cite[\S9]{AM}) the site $X_\et$, and hence also $(X,T)_\et$ is locally connected. Pointing $(X,T)_\et$ by choosing any ``geometric'' point $\bar x$  described at the end of \cref{defsect}, we obtain the \'{e}tale fundamental group $\pi_1^\et(X,T,\bar x)$. It is independent of the choice of $\bar x$ up to isomorphism, which is canonical up to inner automorphisms. By \cite[Cor.\,10.7]{AM}, for any group $G$, the set $\Hom(\pi_1^\et(X,T,\bar x),G)$ is in bijection with the set of isomorphism classes of pointed (over $\bar x$) $G$-torsors in $(X,T)_\et$. In particular, $\pi_1^\et(X,\varnothing,\bar x)$ is the enlarged \'{e}tale fundamental group of \cite[X, \S6]{sga3} and its profinite completion is the usual \'{e}tale fundamental group of $X$ defined in \cite{sga1}.
If $\bar x$ is a geometric point of $X$, then $\pi_1^\et(X,T,\bar x)$ is a factor group of $\pi^{\et}_1(X,\varnothing,\bar x)$, which is profinite for normal $X$ by  \cite[Thm.\,11.1]{AM}. Hence we obtain the following result.

\begin{proposition}\label{profinite}
Let $X$ be a noetherian, normal, connected scheme and $T\subset X$. Then  (for any choice of base point)
$\pi_1^\et(X,T)$ is a profinite group. Its finite quotients are in bijection with the isomorphism classes of finite connected pointed \'{e}tale Galois covers of~$X$ in which every point $t\in T$ splits completely.
\end{proposition}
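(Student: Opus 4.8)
The argument will combine the description of $\Hom(\pi_1^\et(X,T),G)$ from \cite[Cor.\,10.7]{AM} with the Galois correspondence for the fundamental group of the connected scheme $X$. The key point is a dictionary: for finite $G$, a $G$-torsor on $(X,T)_\et$ is the same thing as a $G$-torsor on the \'etale site $X_\et$ of $X$ (i.e.\ on $(X,\varnothing)_\et$) in which every $t\in T$ splits completely.

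Profiniteness is, as indicated before the statement, essentially formal. Since every covering family in $(X,T)_\et$ is an \'etale covering family, a $G$-torsor on $(X,T)_\et$ with $G$ finite is \'etale-locally trivial, hence — by effective descent along \'etale coverings for the affine group scheme $G$ — represented by a finite \'etale $X$-scheme; in particular it is a torsor on $X_\et$ as well. Consequently the map sending a pointed $G$-torsor on $(X,T)_\et$ to the underlying pointed $G$-torsor on $X_\et$ is injective on isomorphism classes for every $G$, so the morphism dual to these inclusions exhibits $\pi_1^\et(X,T)$ as a quotient of the enlarged \'etale fundamental group $\pi_1^\et(X,\varnothing)$. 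The latter is profinite because $X$ is normal \cite[Thm.\,11.1]{AM}, so $\pi_1^\et(X,T)$ is profinite, and its finite quotients correspond to open normal subgroups.

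To complete the dictionary I would argue as follows. Let $P$ be a $G$-torsor on $X_\et$ with $G$ finite, regarded as a finite \'etale $X$-scheme with free $G$-action and $P/G=X$. Then $P$ is (the sheaf of points of) a torsor on $(X,T)_\et$ if and only if $P$ is trivialised by some covering in $(X,T)_\et$, and since $P$ is trivialised over itself, this happens exactly when $\{P\to X\}$ is a covering in $(X,T)_\et$, i.e.\ when every $t\in T$ splits in $P$. Now the fibre $P\times_X\Spec k(t)$ is a $G$-torsor over $k(t)$, hence a disjoint union of spectra of finite separable extensions of $k(t)$ on which $G$ acts transitively; so $t$ splits in $P$ precisely when this fibre is the trivial torsor, that is, when $t$ splits completely in $P$. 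Hence a $G$-torsor on $(X,T)_\et$ is the same as a $G$-torsor on $X_\et$ in which every $t\in T$ splits completely, and $\Hom(\pi_1^\et(X,T),G)$ is the set of isomorphism classes of pointed such torsors.

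Finally, under \cite[Cor.\,10.7]{AM} a surjection $\pi_1^\et(X,T)\twoheadrightarrow G$ corresponds to a \emph{connected} pointed $G$-torsor, and forgetting the point and the identification of the structure group with the group of deck transformations converts surjections modulo isomorphisms of $G$ into isomorphism classes of connected finite \'etale Galois coverings; by the dictionary above these are exactly the connected Galois coverings of $X$ in which every $t\in T$ splits completely, and open normal subgroups of $\pi_1^\et(X,T)$ correspond to them bijectively. The only step that needs real care is the representability used above — that an a priori abstract $(X,T)_\et$-$G$-torsor is a finite \'etale $X$-scheme — together with the translation of the covering-axiom word ``splits'' into ``splits completely''; the remainder is the standard Galois formalism, with normality of $X$ entering solely through \cite[Thm.\,11.1]{AM}.
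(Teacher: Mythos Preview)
Your argument is correct and follows exactly the route the paper indicates in the paragraph preceding the proposition (which is itself stated without a formal proof): $\pi_1^\et(X,T)$ is a quotient of $\pi_1^\et(X,\varnothing)$, the latter is profinite by \cite[Thm.\,11.1]{AM}, and the description of finite quotients comes from \cite[Cor.\,10.7]{AM}. You supply the details the paper leaves implicit --- in particular the torsor dictionary and the observation that, for a $G$-torsor, a single split point in the fibre forces the whole fibre to split because $G$ acts transitively --- all of which are correct.
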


\begin{example}
For general $(X,T)$, the fundamental group need not be profinite. For example, let $k$ be a field and $N=\P^1_k/(0\sim 1)$ the node over $k$. Then
\[
\pi_1^\et(N,T)\cong\left\{ \begin{array}{cl}
\Z \times \Gal_k, & T=\varnothing\\
\Z, & T=X.
\end{array}\right.
\]
\end{example}
We will use the notation $\hat{\pi}_1^\et(X,T)$ for the profinite completion of $\pi_1^\et(X,T)$, hence we have a completion map $\pi_1^\et(X,T)\to \hat{\pi}_1^\et(X,T)$ which is an isomorphism by \cref{profinite} if $X$ is a noetherian, normal and connected scheme.

\medskip
We end this section with the following observation concerning products.

\begin{proposition}
Let $k$ be a field, $X$ and $Y$ geometrically connected schemes of finite type over $k$ and $S \subset X(k)$, $T\subset Y(k)$ nonempty sets of $k$-rational points. Let $a$ and $b$ be geometric points of $X\smallsetminus S$ and $Y \smallsetminus T$ with values in a common separably closed extension field of $k$. Assume that at least one of the schemes  $X$ and $Y$ is proper over $k$. Then the natural map
\[
\hat \pi_1^\et (X\times_k Y, S\times T, (a,b)) \lang \hat \pi_1^\et (X, S,a) \times \hat \pi_1^\et (Y, T,b)
\]
is an isomorphism of profinite groups.
\end{proposition}

\begin{proof} We omit the base points from notation. For a connected scheme $X$, let $\widetilde{X}$ denote the profinite universal cover. For a subset $S\subset X$, the kernel of $\hat\pi_1^\et(X) \to \hat\pi_1^\et(X,S)$ is the (closed) normal subgroup of $\hat\pi_1^\et(X)=\Gal(\widetilde{X}|X)$ generated by the decomposition groups of the points in $S$, i.e., it is the (closed) subgroup of $\Gal(\widetilde{X}|X)$ generated by all automorphisms which fix a point $\tilde{s}\in \widetilde{X}$ lying over some $s\in S$. We denote this group by $K(X,S)$.

Now assume we are in the situation of the proposition. By the topological invariance of the \'{e}tale topology we may assume that $k$ is perfect.
Let $\bar k$ be an algebraic closure of~$k$. We denote the base changes to $\bar k$ by $(\bar X, \bar S)$ and $(\bar Y, \bar T)$. By  \cite[X,\ 1.7]{sga1}, we have a natural isomorphism
\[
\hat \pi_1^\et(\bar X \times_{\bar k} \bar Y) \liso  \hat \pi_1^\et(\bar X) \times \hat \pi_1^\et(\bar Y).
\]
Moreover, by \cite[IX,\, 6.1]{sga1}, we have a natural exact sequence
 \[
1 \lang \hat \pi_1^\et(\bar X) \lang \hat \pi_1^\et(X) \lang \Gal(\bar k|k)\lang 1.
 \]
This and the similar sequence for $Y$  shows the isomorphism
\[
\hat \pi_1^\et (X\times_k Y) \liso \hat \pi_1^\et(X) \times_{\Gal(\bar k|k)} \hat{\pi}_1^\et (Y),\eqno (*)
\]
where the term on the right hand side is a fibre product in the category of profinite groups. We consider the corresponding diagram of \'{e}tale Galois covers.
\[
\begin{tikzcd}
&{\widetilde{{X\times_k Y}}}\arrow[d,"\wr"]\\
                                                                                       & \widetilde{X} \times_{\bar{k}} \widetilde{Y} \arrow[ld, "\pi_1^\et(\bar Y)"'] \arrow[rd, "\pi_1^\et(\bar X)"] &                                                                                        \\
\widetilde X \times_{\bar k}\bar Y \arrow[rd] \arrow[rdd, "\pi_1^\et(X)"', bend right] &                                                                                                                                                   & \bar X \times_{\bar k} \widetilde{Y} \arrow[ld] \arrow[ldd, "\pi_1^\et(Y)", bend left] \\
                                                                                       & \bar X \times_{\bar k} \bar Y \arrow[d, "\Gal(\bar k|k)"]                                                                                            &                                                                                        \\
                                                                                       & X \times_k Y                                                                                                                                      &
\end{tikzcd}
\]

 Let $(s,t)\in S\times T \subset X\times_kY$ and let $(\tilde{s},\tilde{t})\in \widetilde{{X\times_k Y}}=\widetilde{X}\times_{\bar k}\widetilde{Y}$ be a point lying above $(s,t)$.
An element $\sigma \in \hat\pi_1^\et(X\times_k Y)= \Gal(\widetilde{{X\times_k Y}}|X\times_kY)$ fixes $(\tilde s,\tilde t)$ if and only if its image in $\hat\pi_1^\et(X)=\Gal(\tilde{X}\times_{\bar k} \bar Y|X\times_kY)$ fixes $\tilde{s}\in \widetilde{X}$ and its image in  $\hat\pi_1^\et(Y) = \Gal(\bar{X}\times_{\bar k} \tilde Y|X\times_kY)$ fixes $\tilde t\in \tilde Y$. Hence, the isomorphism $(*)$ induces an isomorphism of subgroups
 \[
 K(X\times_k Y, S\times T) \liso K(X,S) \times_{\Gal(\bar k|k)} K(Y,T). \eqno (**)
 \]
The isomorphisms $(*)$ and $(**)$ together induce an isomorphism
\[
\hat \pi_1^\et(X\times_k Y, S\times T) \liso C
\] with
 \[
 C=\coker \big(K(X,S) \times_{\Gal(\bar k|k)} K(Y,T) \lang \hat \pi_1^\et(X) \times_{\Gal(\bar k|k)} \hat{\pi}_1^\et (Y)\big).
 \]
The natural homomorphism $C\to \hat\pi_1^\et(X,S)\times \hat\pi_1^\et(Y,T)$ is injective. To conclude the proof of the proposition, it remains to show surjectivity, i.e., we have to show that every element in $\hat\pi_1^\et(X,S)\times \hat\pi_1^\et(Y,T)$ has a preimage in $\hat \pi_1^\et(X) \times_{\Gal(\bar k|k)} \hat{\pi}_1^\et (Y)\subset \hat \pi_1^\et(X) \times \hat{\pi}_1^\et (Y)$. For this it suffices to show that the composite map $K(X,S)\hookrightarrow \pi_1^\et(X) \to \Gal(\bar k | k)$ is surjective. This is true since $K(X,S)$ contains the decomposition group of a $k$-rational point.
\end{proof}

\section{A modification}
We consider a modification of the marked \'{e}tale site which was used in \cite{Sch} for one-dimensional, noetherian regular schemes.
\begin{definition}
The \emph{strict marked \'{e}tale site} $(X,T)_{\textrm{et-s}}$ consists of the following data:
$\Cat (X,T)_{\textrm{et-s}}$ is the category of morphisms $f: (U,S)\to (X,T)$ such that \smallskip
\begin{compactitem}
  \item[\rm a)] $(f: U \to X)$ is \'{e}tale,
  \item[\rm b)] $S=p^{-1}(T)$, and
  \item[\rm c)] for every $u\in S$ mapping to $t\in T$ the induced field homomorphism $k(s)\to k(u)$ is an isomorphism.
\end{compactitem}
Coverings are surjective families.
\end{definition}

\begin{proposition} {\rm (i)}
If $T\subset X$ consists of a finite set of closed points, then the natural morphism of sites $\varphi: (X,T)_\et \to (X,T)_{\mathrm{et-s}}$ induces isomorphisms
\[
H^p_{\mathrm{et-s}}(X,T,F) \liso H^p_{\et}(X,T,\varphi^*F),\ H^p_{\mathrm{et-s}}(X,T,\varphi_* G) \liso H^p_{\et}(X,T,G)
\]
for any $F\in \Sh_{\mathrm{et-s}}(X,T)$, $G\in \Sh_{\mathrm{et-s}}(X,T)$ and  $p\ge 0$.

\ms\noi {\rm (ii)}
For locally noetherian $X$ (and any chosen base point), the natural map
\[
\pi_1^\et(X,T) \lang \pi_1^{\mathrm{et-s}}(X,T)
\]
is an isomorphism.
\end{proposition}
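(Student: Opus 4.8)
The plan is to realise $\varphi$ as the morphism of sites attached to the inclusion functor $\iota\colon \Cat(X,T)_{\mathrm{et}\textrm{-}\mathrm{s}}\hookrightarrow \Cat(X,T)_\et$ and to reduce both statements to one elementary observation. \textit{Geometric lemma.} If $T$ is a finite set of closed points, then every object $(U,S)\to (X,T)$ of $\Cat(X,T)_\et$ together with a point $\tilde t\in S$ above some $t\in T$ with $k(t)\liso k(\tilde t)$ contains an open subscheme which, with the restricted marking, lies in $\Cat(X,T)_{\mathrm{et}\textrm{-}\mathrm{s}}$ and still contains $\tilde t$: since $U\to X$ is of finite presentation, the fibre of $S$ over each of the finitely many points of $T$ is a finite set of closed points of $U$, and deleting those at which the residue extension is nontrivial removes finitely many closed points, none of which is $\tilde t$. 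Informally: every marked étale neighbourhood of a point above $T$ that splits contains a strict one. I expect this lemma to be the only real input; the rest is formal.

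First I would check that $\iota$ is continuous (a surjective family in the strict site is a covering in the marked site, because by condition~c) every point above $T$ splits automatically), commutes with fibre products and preserves the final object $(X,T)$; hence $\varphi$ is a morphism of sites, $\varphi_*$ is restriction along $\iota$, and $\varphi^*$ is exact. For part~(i) I would then compute stalks on a conservative family of points of $(X,T)_{\mathrm{et}\textrm{-}\mathrm{s}}$ --- the geometric points above the $x\notin T$ together with, for $t\in T$, the point attached to $(\Spec k(t),\Spec k(t))\to(X,T)$ (conservativity being checked exactly as for the marked site in \cref{defsect}). Away from $T$ nothing changes. Above $t\in T$, the geometric lemma shows that the strict étale neighbourhoods of $\tilde t$ are cofinal among all marked étale neighbourhoods of $\tilde t$, so the stalk of $\varphi_*G$ at $t$ is the stalk of $G$ at $(\Spec k(t),\Spec k(t))$; the same cofinality --- together with the fact that two $X$-morphisms of étale schemes that agree at $\tilde t$ agree on a neighbourhood of $\tilde t$, so that the presheaf-pullback colimit collapses in the limit --- shows that the unit $F\to\varphi_*\varphi^*F$ is an isomorphism on the stalk at $t$ as well. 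Consequently $\varphi_*$ is exact and $\varphi^*$ is fully faithful. Now the Leray spectral sequence for $\varphi$ degenerates ($R^q\varphi_*=0$ for $q>0$) and yields $H^p_{\mathrm{et}\textrm{-}\mathrm{s}}(X,T,\varphi_*G)\cong H^p_\et(X,T,G)$ for every $G$; applying this to $G=\varphi^*F$ and using $\varphi_*\varphi^*F\cong F$ gives $H^p_{\mathrm{et}\textrm{-}\mathrm{s}}(X,T,F)\cong H^p_\et(X,T,\varphi^*F)$.

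For part~(ii) I would fix base points on the two sites that correspond under $\varphi$ (e.g.\ both with trivial residue extension over $T$), so that $\varphi$ is a morphism of pointed sites inducing a homomorphism $\pi_1^\et(X,T)\to\pi_1^{\mathrm{et}\textrm{-}\mathrm{s}}(X,T)$. By \cite[Cor.\,10.7]{AM}, for every group $G$ the set of homomorphisms of either fundamental group into $G$ is the set of isomorphism classes of pointed $G$-torsors on the respective site, and the above homomorphism induces the pullback functor $\varphi^*$ between these torsors; it therefore suffices to show that $\varphi^*$ is an equivalence from pointed $G$-torsors on $(X,T)_{\mathrm{et}\textrm{-}\mathrm{s}}$ to pointed $G$-torsors on $(X,T)_\et$, naturally in $G$. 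Full faithfulness is part~(i) once more: $\Hom(\varphi^*P,\varphi^*P')=\Hom(P,\varphi_*\varphi^*P')=\Hom(P,P')$. For essential surjectivity, let $Q$ be a $G$-torsor on $(X,T)_\et$, trivialised on a marked covering $\{(V_j,R_j)\to(X,T)\}$. For each $t\in T$ choose an index $j$ and a split point of $V_j$ above $t$, and use the geometric lemma to extract a strict open neighbourhood $W_t\subseteq V_j$ of it; then the $W_t$ ($t\in T$) together with the $V_j\smallsetminus R_j$ form a covering in $(X,T)_{\mathrm{et}\textrm{-}\mathrm{s}}$ refining $\{(V_j,R_j)\}$. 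Pulling back the gluing cocycle of $Q$ to this refinement and using $\varphi^*\underline G=\underline G$ exhibits $Q$ as $\varphi^*P$, with $P$ the $G$-torsor on $(X,T)_{\mathrm{et}\textrm{-}\mathrm{s}}$ defined by the same cocycle. Hence $\varphi^*$ is an equivalence, $\Hom(\pi_1^{\mathrm{et}\textrm{-}\mathrm{s}}(X,T),G)\cong\Hom(\pi_1^\et(X,T),G)$ naturally in $G$, and therefore $\pi_1^\et(X,T)\liso\pi_1^{\mathrm{et}\textrm{-}\mathrm{s}}(X,T)$.

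The one place where I expect to spend real care is the end of~(ii): keeping the equivalences of $G$-torsor categories compatible with base points and with change of $G$, and deducing cleanly, from the resulting natural bijection $\Hom(\pi_1^{\mathrm{et}\textrm{-}\mathrm{s}}(X,T),G)\cong\Hom(\pi_1^\et(X,T),G)$, that the corresponding map of pro-groups is an isomorphism. (One could instead bypass torsors and work directly with the pro-homotopy types of \cite{AM}, feeding in the cohomology isomorphisms of~(i), but the torsor formulation looks shortest.)
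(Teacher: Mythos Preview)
Your approach is essentially the paper's: your ``geometric lemma'' is precisely the paper's move of deleting the finitely many non-split marked points to turn a marked-\'etale covering of a strict object into a strict covering, and both proofs then feed this into Leray for (i) and into the torsor description of $\pi_1$ (via \cite[Cor.\,10.7]{AM}) for (ii). The paper's execution of (i) is more direct than yours: instead of computing stalks, it simply observes that any covering in $(X,T)_\et$ of an object of $\Cat(X,T)_{\mathrm{et}\textrm{-}\mathrm{s}}$ admits a strict refinement, from which $\varphi_*\varphi^*F=F$ and $R^q\varphi_*G=0$ for $q\ge 1$ follow at once; your cofinality/stalk argument reaches the same conclusion but with more bookkeeping.

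One point to watch in your treatment of (ii): your essential-surjectivity step invokes the geometric lemma and the opens $V_j\smallsetminus R_j$, both of which need $T$ to be a finite set of closed points, whereas the stated hypothesis of (ii) is only that $X$ is locally noetherian. The paper's proof of (ii) is a single sentence (``a $G$-torsor in $(X,T)_\et$ is the same as a $G$-torsor in $(X,T)_{\mathrm{et}\textrm{-}\mathrm{s}}$'') and, read in context, appears to rely on the same refinement trick just established for (i); so either the finiteness hypothesis is tacitly carried over, or an additional argument is needed to produce a strict trivializing cover for arbitrary $T$. If you intend (ii) in full generality, you should address this; otherwise your argument matches the paper's.
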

\begin{proof}
Let $(U,S)\in \Cat (X,T)_{\textrm{et-s}}$ and assume that $(f_i: (U_i,S_i)\to (U,S))$ is a covering in $(X,T)_\et$. Removing for all $i$ the finitely many points $s\in S_i$ such that $k(f(s_i))\to k(s_i)$ is not an isomorphism from $U_i$, we obtain a strict covering $(f_i: (U_i',S_i')\to (U,S))$ which is a refinement of the original one. Hence $\varphi_*\varphi^*F=F$ and $R^q\varphi_* G=0$ for $q\ge 1$. In view of the Leray spectral sequence, this shows (i). Assertion (ii) follows since both pro-groups represent the same functor: for any group $G$, a $G$-torsor in $(X,T)_\et$ is the same as a $G$-torsor in $(X,T)_{\textrm{et-s}}$.
\end{proof}

\vskip1cm\noindent
\emph{Acknowledgement.} The author thanks Philippe Lebacque for helpful comments and motivational discussions.

\ms\noindent
{\small
Alexander Schmidt, Mathematisches  Institut, Universit\"{a}t Heidelberg, Im Neuenheimer Feld 205, 69120 Heidelberg, Deutschland\\
email: {\tt schmidt@mathi.uni-heidelberg.de}}
\end{document}